\def\aa{{\mathcal A}}
\def\bb{{\mathcal B}}
\def\cc{{\mathcal C}}
\def\ff{{\mathcal F}}
\def\hh{{\mathcal H}}
\def\ll{{\mathcal L}}
\def\qq{{\mathcal Q}}
\def\rr{{\mathcal R}}
\def\ss{{\mathcal S}}
\def\tt{{\mathcal T}}
\def\ffi{\varphi}
\def\eps{\varepsilon}
\def\dst{\displaystyle}
\renewcommand{\Im}{\DeclareMathOperator{\Im}{Im}}
\DeclareMathOperator{\tr}{tr}
\DeclareMathOperator{\supp}{supp}
\DeclareMathOperator{\sgn}{sgn}
\DeclareMathOperator{\cotan}{cotan}
\def\C{{\mathbb{C}}}
\def\N{{\mathbb{N}}}
\def\Q{{\mathbb{Q}}}
\def\R{{\mathbb{R}}}
\def\T{{\mathbb{T}}}
\def\Z{{\mathbb{Z}}}
\newcommand{\norm}[1]{{\left\|{#1}\right\|}}
\newcommand{\ent}[1]{{\left[{#1}\right]}}
\newcommand{\abs}[1]{{\left|{#1}\right|}}
\newcommand{\scal}[1]{{\left\langle{#1}\right\rangle}}
\newcommand{\set}[1]{{\left\{{#1}\right\}}}
\newenvironment{definition}[1][]{\vskip3pt\noindent\sl\textbf{Definition.}\ }{\rm\vskip3pt}
\newenvironment{remark}[1][]{\vskip3pt\noindent\textbf{Remark.}\ }{\rm\vskip3pt}
\newenvironment{proof}[1][]{\vskip3pt\noindent\textit{Proof.}\ }{\rm\vskip3pt}
\newtheorem{lemma}{Lemma}[section]
\newtheorem{proposition}[lemma]{Proposition}
\newtheorem{theorem}[lemma]{Theorem}
\newtheorem{corollary}[lemma]{Corollary}
\date{\today}
\newcommand{\ieg}{\leqslant}
\newcommand{\de}{\mathrm{d}}
\begin{document}
\title[Generalized Pauli Problem]{Zak Transform and non-uniqueness in an extension of Pauli's phase retrieval problem}

\author{Simon Andreys \& Philippe Jaming}

\begin{abstract}
The aim of this paper is to pursue the investigation of the phase retrieval problem for the fractional Fourier transform $\ff_\alpha$
started by the second author.
We here extend a method of A.E.J.M Janssen to
show that there is a countable set $\qq$ such that for every finite subset $\aa\subset \qq$, there exist two
functions $f,g$ not multiple of one an other such that $|\ff_\alpha f|=|\ff_\alpha g|$ for every $\alpha\in \aa$.
Equivalently, in quantum mechanics, this result reformulates as follows:
if $Q_\alpha=Q\cos\alpha+P\sin\alpha$ ($Q,P$ be the position and momentum observables),
then $\{Q_\alpha,\alpha\in\aa\}$ is not informationally complete with respect to pure states.

This is done by constructing two functions $\ffi,\psi$ such that $\ff_\alpha\ffi$ and $\ff_\alpha\psi$ have disjoint support for each 
$\alpha\in \aa$. To do so, we establish a link between $\ff_\alpha[f]$, $\alpha\in \qq$ and the Zak transform $Z[f]$
generalizing the well known marginal properties of $Z$.
\end{abstract}

\keywords{Zak transform; Weyl-Heisenberg transform; Fractional Fourier Transform; Phase Retrieval; Pauli problem}

\address{Institut de Math\'ematiques de Bordeaux UMR 5251,
Universit\'e de Bordeaux, cours de la Lib\'eration, F 33405 Talence cedex, France}
\email{Philippe.Jaming@gmail.com}

\maketitle

\section{Introduction}

Phase Retrieval Problems arise in many aspects of applied science including optics and quantum mechanics.
The problem consists of reconstructing a function $\ffi$ from phase-less measurements
of some transformations of $\ffi$ and eventual a priori knowledge on $\ffi$. 
Recently, this family of problems has attracted a lot of attention in the mathematical
community ({\it see e.g.} \cite{candes,candes2,mallat} or the research blog \cite{Mi}).
A typical such problem
is the phase retrieval problem in optics where one wants to reconstruct a compactly supported function
$\ffi$ from the modulus of its Fourier transform $|\ff[\ffi]|$. An other such problem, due to Pauli,
is to reconstruct $\ffi$ from its modulus $|\ffi|$ and that of its Fourier transform $|\ff[\ffi]|$.
The questions has two sides. On one hand, one wants to know if such a problem has a unique solution
(up to a global phase factor or a more general transformation). On the other hand, one is looking
for an algorithm that allows to reconstruct $f$ from phase-less measurements.

In this paper, we focus on the (non) uniqueness aspects of a generalization of Pauli's problem
in which one has phase-less measurements of Fractional Fourier Transform (FrFT)
as has been previously studied by the second author \cite{jaming}.
Recall from \cite{Na,OZK} that the FrFT is defined as follows:
for $\alpha\in\R\setminus\pi\Z$ a parameter that is interpreted as an angle, and $f\in L^1(\R)\cap L^2(\R)$,
$$
\ff_\alpha f(\xi)
=c_\alpha e^{-i\pi|\xi|^2\cot\alpha}\ff[e^{-i\pi|\cdot|^2\cot\alpha}f](\xi/\sin\alpha)
$$
where $c_\alpha$ is a normalization constant which ensures that $\ff_\alpha$ extends to a unitary isomorphism of $L^2(\R)$.
Note that $\ff_{\pi/2}=\ff$.
One then defines $\ff_{k\pi}[\ffi](\xi)=\ffi\bigl((-1)^k\xi\bigr)$ so that $\ff_\alpha\ff_\beta=\ff_{\alpha+\beta}$.
Roughly speaking,
the Fractional Fourier Transforms are adapted to the mathematical expression of the Fresnel
diffraction, just as the standard Fourier transform is adapted to Fraunhofer diffraction.
It also occurs in quantum mechanics (see below).

The question we want to address here is the following:

\medskip

\noindent{\bf Phase Retrieval Problem for Multiple Fractional Fourier Transforms.}\\
{\sl Let $\aa\subset(0,\pi)$. For $f,g\in L^2(\R)$ does
$|\ff_\alpha[g]|=|\ff_\alpha[f]|$ for every $\alpha\in\aa$ imply that there is a constant $c\in\C$ with $|c|=1$
such that $g=cf$.

If this is the case, we will say that $\{Q_\alpha\,:\alpha\in\aa\}$ is \emph{Informationnaly Complete with respect to Pure States}
(ICPS).}

\medskip

The vocabulary comes from quantum mechanics ({\it see e.g.} \cite{CHST}).
For sake of self-containment, let us now  recall some basic notions from quantum mechanics. 
The description of a physical system is based on a complex separable Hilbert space $\mathcal{H}$.
For our purpuses, $\mathcal{H}=L^2(\R)$. We will denote by $\bb=\bb(\hh)$ and $\tt=\tt(\hh)$ the bounded and trace class operators on $\hh$.
The state of the system is then represented by an element 
$\rho$ of $\tt(\hh)$ that is positive $\rho\geq0$ and normalized by $\tr[\rho]=1$.
The states form a convex set whose extreme points, the \emph{pure states} are the one dimensional projections.

The \emph{observables} are normalized Positive Operator Valued Measure $E\,:\bb(\R)\to\ll(\hh)$ where $\bb(\R)$
is the Borel set $\R$, that is, maps satisfying $E[X]\geq 0$, $E[\R]=I$,
and, for every $\rho\in\tt(\hh)$, for every sequence $(X_j)$ of pairwise disjoint sets in $\bb(\R)$,
$\tr[\rho E(\bigcup_jX_j)]=\sum_j\tr[\rho E(X_j)]$. It follows that, for any state $\rho$, $\rho^E=\tr[\rho E]$
is a probability measure on $\R$. The number $\rho^E(X)$ is interpreted as the probability that the measurement of $E$
gives an outcome from the set $X$ when the system is initially prepared in the state $\rho$.

The problem we are concerned here is thus following:

\begin{definition}
Let $\aa$ be a collection of observables $\bb(\R)\to\ll(\hh)$. Then

-- $\aa$ is informationally complete if, for any
$\rho_1,\rho_2\in\tt(\hh)$, $\rho_1^E=\rho_2^E$ for all $E\in\aa$
implies $\rho_1=\rho_2$;

-- $\aa$ is informationally complete \emph{with respect to pure states} if, for any pure states
$\rho_1,\rho_2$, $\rho_1^E=\rho_2^E$ for all $E\in\aa$
implies $\rho_1=\rho_2$.
\end{definition}

Now let $Q,P$ be the position and momentum observables. For any $\alpha\in[0,2\pi]$, define
$Q_\alpha(X)=\ff_{-\alpha}Q\ff_\alpha=Q\cos\alpha+P\sin\alpha$.
For a system in a pure state $\rho=|\psi\rangle\langle\psi|$, the measurement outcome probabilities are given by
$$
\rho^{Q_\alpha}(X)=\scal{\psi,Q_\alpha(X)\psi}=\int_X|\ff_\alpha\psi(x)|^2\,\de x.
$$
This shows that the probability density associated to a measurement of the observable $Q_\alpha$ preformed on a pure
state $|\psi\rangle\langle\psi|$ is just the intensity $|\ff_\alpha\psi(x)|^2$.

\medskip

In this setting, Pauli conjectered that $\{0,\pi/2\}$ is informationnaly complete with respect to pure states.
However, this was soon disproved and there are now a fairly large number of counterexamples
({\it see e.g.} \cite{jamingjfaa}, \cite{Co} and references therein). In \cite{Vo}, A. Vogt mentioned 
a conjecture by Wright that there exists a
third observables such that this observable together with position and momentum forms an informationnaly complete with respect to pure states family. This amounts to looking for a unitary operator $U_3\,:L^2(\R)\to L^2(\R)$ such that, if we set
$U_1=I$, $U_2=\ff$ then $|U_jf|=|U_jg|$ if and only if $f=cg$ with $|c|=1$.
In \cite{jaming}, the second author investigated the above question further. In particular, when
$\alpha\notin\Q\pi$ none of the classes of counterexamples 
mentioned in \cite{jamingjfaa,Co} allowed to show that $\aa:=\{Q(\theta), \theta=0,\alpha,\pi/2\}$
would not be ICPS. This lead naturally to conjecture that $\aa$ was ICPS, which
was recently disproved by Carmeli, Heinosaari, Schultz \& A. Toigo in \cite{CHST}. Actually, those authors
exhibit an invariance property which shows that the ICPS property of $\{Q(\alpha_1),Q(\alpha_2),Q(\alpha_3)\}$
does not depend on $\alpha_1,\alpha_2,\alpha_3$ provided they are different. Then they extend a construction
of \cite{jaming} to show that $\{Q_\alpha,\alpha\in F\}$ is not ICPS when $F\subset\Q\pi$ is finite.

The aim of this paper is to provide a somewhat similar result for an other class of parameters
which is based on an adaptation of an unnoticed counterexample to Pauli's problem due to A.E.J.M. Janssen \cite{janss2}:

\medskip

\noindent{\bf Main Theorem.}\ \\
{\sl 
Let $F\subset\Q$ be a finite set of rational numbers. Define $\aa$ by
$$
\aa=\{\alpha\in (0,\pi)\,:\mbox{ there exists }r\in F\mbox{ with }\cot\alpha=r\}.
$$
Then the collection $\{Q_{\alpha},\alpha\in\aa \}$ is not Informationnaly Complete with
respect to Pure States.}

\medskip

This is based on the fact that a function and its Fourier transform $f$ are recovered from its Zak transform (also known as Weyl-Heisenberg tranform)
\cite{zak67,Gro}
$$
Z[f](x,\xi)=\sum_{k\in\Z} f(x+k)e^{-2i\pi k\xi}.
$$
by integrating along horizontal and vertical lines. We adapt this property by showing that, when $\cot\alpha\in\Q$,
then $\ff_\alpha f$ can also be recovered from $Z[f]$ by integrating in an oblique direction.
We believe this result is interesting in itself.

Once this is done, one can construct two functions $f_1$ and $f_2$ such that for each $\alpha\in\aa$ ($\aa$ defined in the Main Theorem),
$\ff_\alpha[f_1]$ and $\ff_\alpha[f_2]$ have disjoint support. Therefore, for any $c\in\C$ with $|c|=1$, and any $\alpha\in\aa$,
$|\ff_\alpha[f_1+cf_2]|=|\ff_\alpha[f_1]|+|\ff_\alpha[f_2]|$ does not depend on $c$,
but $f_1+cf_2$ is not a constant
multiple of $f_1+f_2$ unless $c=1$ thus proving the main theorem.

\medskip

We complete the paper by showing that the approximate phase retrieval problem for the FrFT has infinitely many solutions that
are far from multiples from one an other:

\medskip

\noindent{\bf The Approximate Phase Retrieval Problem for the Fractional Fourier Transform.}\\
{\sl Leq $0\leq \alpha_1< \cdots<\alpha_n \leq \frac{\pi}{2} $ and $T>0$.
Let $f_1, ... f_n \in L^\infty(\R)$ with $\supp f_j\subset\ent{-T,T}$ and $f_j\geq0$. Then, for every $\eps>0$, there exists 
$\ffi_1,\ldots,\ffi_n \in L^2(\R)$ such that, for $c_1,\ldots,c_n\in\C$ with $|c_1|=\cdots=|c_n|=1$,
$\ffi=\dst\sum_{j=1}^nc_j\ffi_j$ satisfies}
\begin{equation}
\label{eq:approxintro}
\norm{\abs{\ff_{\alpha_k}[\ffi]}-f_k}_{L^\infty(\ent{-T,T})} \leq \eps\qquad\mbox{for }k=1,\ldots, n.
\end{equation}

%
%

\medskip

The remaining of the paper is organized as follows. In the next section, we first make precise definitions of the FrFT and of the Zak 
Transform and then show the oblique marginal property of the Zak transform in Theorem \ref{th:zakmargin}. Section \ref{sec:pauli} is then devoted to
the proof of the main theorem.
The last section is devoted to proving our result on the approximate phase retrieval problem.

\section{The main theorem}

\subsection{The Zak Transform and the Fractional Fourier Transform}

Let $\alpha\notin \pi \Z$, the Fractional Fourier Transform of order $\alpha$ of $f\in L^1(\R)$ is defined as
\begin{eqnarray*}
\ff_\alpha f(\xi)
&=&c_\alpha e^{-i\pi|\xi|^2\cot\alpha}\ff[e^{-i\pi|\cdot|^2\cot\alpha}f](\xi/\sin\alpha) \\
&=& c_\alpha e^{-i\pi \cot{\alpha}\ \xi^2} \int_\R e^{-i\pi/2 \cot{\alpha} \ x^2 -2i \pi \frac{x \xi}{\sin{\alpha} }  } f(x) \ \de x
\end{eqnarray*}
where
$c_\alpha=\frac{\exp{\frac{i}{2}\left(\alpha-\text{sgn}(\alpha) \frac{\pi}{4}  \right)}}{\sqrt{\abs{\sin{\alpha}}}}$
is such that $c_\alpha ^2 =1-i\cot{\alpha}$ and $\rr e\ c_\alpha > 0$.
For $\alpha =k \pi$, $k\in\Z$, we set $\ff_\alpha g (\xi)=g\bigl((-1)^k \xi\bigr)$.

Notice that $\ff_{\frac{\pi}{2}}= \ff$. The FrFT further has the following properties

\begin{enumerate}
\item For $u\in\C$ let $\gamma_u(t)=e^{-u\pi t^2}$ then 
$\ff_\alpha[f](\xi)=c_\alpha \gamma_{i\cot\alpha}(\xi)\ff[\gamma_{i\cot\alpha}f](-\xi/\sin\alpha)$;
\item for every $\alpha\in\R$, and $f\in L^1(\R)\cap L^2(\R)$, $\norm{\ff_\alpha f}_{L^2(\R)}=\norm{f}_{L^2(\R)}$
thus $f$ extends into a unitary operator on $L^2(\R)$;
\item $\ff_\alpha \circ \ff_\beta=\ff_{\alpha+\beta}$.
\end{enumerate}

The Zak transform is defined, for $f\in\ss(\R)$ (the Schwarz class) as
$$
Zf(x,\xi)=\sum_{k\in\Z} f(x+k)e^{-2i\pi k\xi}.
$$
The Zak transform has the following properties
\begin{enumerate}
\item $Z$ extends into a unitary operator $L^2(\R)\to L^2(Q)$ where $Q=[0,1]\times [0,1]$.

\item $Zf(x+n,\xi)=  e^{2 i \pi n \xi} Zf(x,\xi)$ and $Zf(x,\xi+n)  =  Zf(x,\xi)$.
 
\item Poisson summation: $Zf(x,\xi)=e^{2i\pi x \xi } Z \hat{f} (\xi,-x)$ 

\item $Z$ has the following marginal properties:
$$
f(x)= \int_0^1 Zf(x,\xi)\ \de\xi\quad \mbox{and}\quad
\ff f(\xi)=\int_0^1 e^{-2i\pi x \xi} Zf(x,\xi)\ \de x.
$$
\end{enumerate}

We can now prove that the Zak transform has also marginal properties in oblique directions linked to the
Fractional Fourier Transform. Not surprisingly, such a marginal property only occurs when the slope is rational,
as otherwise one would integrate over a dense subset of $Q$.

In order to establish such a property we will first need to compute
the Zak transform of a chirp. This has to be done in the sense of distributions:


\begin{proposition}[Zak transform of a chirp]\label{prop:zakchirp} \ \\
Let $p\in\Z$, $q\in\N$ be relatively prime integers.
For $n\in\Z$, let
\[ 
c_{n,p,q}=\frac{1}{q} \sum_{k=0}^{q-1} (-1)^{kp} e^{i \pi \frac{p}{q} k^2}e^{-2i \pi \frac{nk}{q} }
\]
and, for $x\in [0,1]$, $n\in\Z$, write $\xi_{n,p,q}(x)=\frac{p}{q}x +\frac{1}{2} p +\frac{n}{q}$ and  
$A_{p,q}(x)=\set{n \in \Z |\ 0 \leq \xi_{n,p,q}(x) \leq 1}$.

For $f\in \ss(\R)$,
\begin{equation}
\label{eq:zak}
\int_{\R} f(t) e^{-i\frac{p}{q}t^2}\,\de t =  \int_0^1 e^{-i \pi \frac{p}{q} x^2} \sum_{ n \in A_{p,q}(x) } 
c_{n,p,q}\  Zf \bigl(x,\xi_{n,p,q}(x) \bigr)\,\de x.
\end{equation}
\end{proposition}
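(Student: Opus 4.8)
The plan is to compute, in the distributional sense, the Zak transform of the pure chirp $t\mapsto e^{-i\pi\frac pq t^2}$ — it turns out to be a weighted superposition of Dirac masses carried by the family of parallel oblique lines $\xi=\xi_{n,p,q}(x)$, $n\in\Z$ — and then to pair this distribution against $f$ through the Parseval identity $\scal{f,g}_{L^2(\R)}=\scal{Zf,Zg}_{L^2(Q)}$ for the unitary transform $Z$, suitably extended so as to test a Schwartz function $f$ against a tempered symbol. In practice this reduces to the following explicit computation, which I would write out directly. Since $f\in\ss(\R)$, the left-hand side of \eqref{eq:zak} is an absolutely convergent integral; cutting $\R$ into the intervals $[k,k+1)$, substituting $t=x+k$, expanding $(x+k)^2=x^2+2kx+k^2$, and interchanging $\sum_k$ with $\int_0^1$ (legitimate by the rapid decay of $f$) give
\[
\int_\R f(t)e^{-i\pi\frac pq t^2}\,\de t=\int_0^1 e^{-i\pi\frac pq x^2}\Bigl(\sum_{k\in\Z}f(x+k)\,e^{-2i\pi\frac pq kx}\,e^{-i\pi\frac pq k^2}\Bigr)\,\de x,
\]
so it suffices to prove that the inner sum equals $\sum_{n\in A_{p,q}(x)}c_{n,p,q}\,Zf\bigl(x,\xi_{n,p,q}(x)\bigr)$ for a.e.\ $x\in[0,1]$.

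The heart is a finite (Gauss-type) Fourier identity. From $Zf\bigl(x,\xi_{n,p,q}(x)\bigr)=\sum_k f(x+k)e^{-2i\pi k\xi_{n,p,q}(x)}$ and $e^{-2i\pi k\cdot\frac p2}=(-1)^{pk}$ one gets $Zf\bigl(x,\xi_{n,p,q}(x)\bigr)=\sum_k f(x+k)e^{-2i\pi\frac pq kx}(-1)^{pk}e^{-2i\pi nk/q}$, hence
\[
\sum_{n=0}^{q-1}c_{n,p,q}\,Zf\bigl(x,\xi_{n,p,q}(x)\bigr)=\sum_{k\in\Z}f(x+k)\,e^{-2i\pi\frac pq kx}\,(-1)^{pk}\Bigl(\sum_{n=0}^{q-1}c_{n,p,q}\,e^{-2i\pi nk/q}\Bigr),
\]
and everything comes down to the identity
\[
(-1)^{pk}\sum_{n=0}^{q-1}c_{n,p,q}\,e^{-2i\pi nk/q}=e^{-i\pi\frac pq k^2}\qquad\text{for every }k\in\Z.
\]
Here one uses that $(-1)^{pk}e^{-i\pi\frac pq k^2}=e^{-i\pi\frac pq k(k-q)}$ is $q$-periodic in $k$ (a one-line check, using $(-1)^{2pq}=1$; rationality of the slope $p/q$ is precisely what makes a finite period exist at all). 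Thus $(c_{n,p,q})_n$ is, up to the normalisation $1/q$, the discrete Fourier transform over $\Z/q\Z$ of the $q$-periodic sequence $k\mapsto(-1)^{pk}e^{-i\pi\frac pq k^2}$, and Fourier inversion on $\Z/q\Z$ — i.e.\ $\sum_{n=0}^{q-1}e^{2i\pi nm/q}$ equals $q$ if $q\mid m$ and $0$ otherwise — gives back $\sum_{n=0}^{q-1}c_{n,p,q}e^{-2i\pi nk/q}=(-1)^{pk}e^{-i\pi\frac pq k^2}$; multiplying by $(-1)^{pk}$ and using $(-1)^{2pk}=1$ yields the displayed identity. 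Substituting it back and re-summing (all rearrangements being licit by Schwartz decay), the sum over $n\in\{0,\dots,q-1\}$ equals the inner sum of the first display.

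It remains to replace $\{0,\dots,q-1\}$ by $A_{p,q}(x)$. The summand $c_{n,p,q}Zf\bigl(x,\xi_{n,p,q}(x)\bigr)$ is $q$-periodic in $n$: $c_{n,p,q}$ is visibly so, while $\xi_{n+q,p,q}(x)=\xi_{n,p,q}(x)+1$ together with the $1$-periodicity of $Zf$ in its second variable (one of the listed properties of $Z$) handles the other factor. On the other hand the inequalities $0\leq\xi_{n,p,q}(x)\leq1$ defining $A_{p,q}(x)$ cut out an interval of length exactly $q$ in the variable $n$, so for a.e.\ $x$ the set $A_{p,q}(x)$ is a complete residue system modulo $q$; hence the two sums agree for a.e.\ $x$, and \eqref{eq:zak} follows after multiplying back by $e^{-i\pi\frac pq x^2}$ and integrating.

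The genuinely delicate point is this Gauss-sum identity, and in particular its validity for \emph{every} $k\in\Z$ rather than only for $0\leq k<q$ — equivalently, the $q$-periodicity in $k$ of $(-1)^{pk}e^{-i\pi\frac pq k^2}$, which is where the arithmetic of $p$ and $q$ really enters and which is exactly the reason for the half-integer shift $\tfrac p2$ in $\xi_{n,p,q}$ and the sign $(-1)^{pk}$ in $c_{n,p,q}$. Everything else — the Fubini/Tonelli interchanges, the distributional form of Parseval for $Z$ if one prefers the conceptual route, and the combinatorics of $A_{p,q}(x)$ — is routine once $f\in\ss(\R)$.
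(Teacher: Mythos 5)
Your overall strategy --- unfold the left-hand side over the intervals $[k,k+1)$, observe that the resulting weight in $k$ is $q$-periodic, and resolve it by finite Fourier inversion on $\Z/q\Z$ against the characters $e^{-2i\pi nk/q}$ hidden in $Zf\bigl(x,\xi_{n,p,q}(x)\bigr)$ --- is sound, and it is genuinely different from (and more elementary than) the paper's proof, which truncates the Zak transform of the chirp, rewrites it as $P(x,\xi)\,D_M\bigl(2(px-q\xi)\bigr)$ with $D_M$ a Dirichlet kernel, and passes to the limit via uniform convergence of Fourier partial sums of smooth periodic functions. However, the step you yourself single out as the heart of the matter is false as stated. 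By definition $c_{n,p,q}=\frac1q\sum_{k=0}^{q-1}b_k e^{-2i\pi nk/q}$ with $b_k=(-1)^{kp}e^{+i\pi\frac pq k^2}$ --- note the sign: $(c_{n,p,q})_n$ is the DFT of the $q$-periodic \emph{even} sequence $(b_k)$, not of its conjugate $(-1)^{pk}e^{-i\pi\frac pq k^2}$ as you assert. Fourier inversion on $\Z/q\Z$ therefore gives
\[
\sum_{n=0}^{q-1}c_{n,p,q}\,e^{-2i\pi nk/q}=b_{-k}=b_k=(-1)^{pk}e^{+i\pi\frac pq k^2},
\qquad\text{hence}\qquad
(-1)^{pk}\sum_{n=0}^{q-1}c_{n,p,q}\,e^{-2i\pi nk/q}=e^{+i\pi\frac pq k^2},
\]
not $e^{-i\pi\frac pq k^2}$. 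A concrete check: for $p=1$, $q=2$, $k=1$ one has $c_{0,1,2}=\frac{1-i}2$, $c_{1,1,2}=\frac{1+i}2$, so $(-1)^{pk}(c_{0,1,2}-c_{1,1,2})=i=e^{+i\pi/2}$, whereas your claimed value is $e^{-i\pi/2}=-i$. With the inversion done correctly, your chain of equalities yields $\sum_{n\in A_{p,q}(x)}c_{n,p,q}Zf\bigl(x,\xi_{n,p,q}(x)\bigr)=\sum_k f(x+k)e^{-2i\pi\frac pq kx}e^{+i\pi\frac pq k^2}$, which is not the inner sum of your first display; the two differ by more than a global phase, so the proof does not close.

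What your method actually establishes, once the sign is repaired, is \eqref{eq:zak} with $\overline{c_{n,p,q}}$ in place of $c_{n,p,q}$: the coefficient you need is the $n$-th DFT coefficient of $(\overline{b_k})$, which by evenness of $b$ equals $\overline{c_{n,p,q}}$. I believe this conjugated form is the correct statement (the test $p=1$, $q=2$ with $f$ supported in $(1,2)$ gives $-i\int_0^1 f(x+1)e^{-i\pi x^2/2}e^{-i\pi x}\,\de x$ on the left and $+i$ times the same integral on the right of \eqref{eq:zak} as printed, while $\overline{c_{n,p,q}}$ restores equality); the paper's own derivation slips a conjugation at the simplification of $P$, where $P$ is evaluated at $\frac{2px-\ell}{2q}$ instead of $\frac{2px+\ell}{2q}$. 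Since $\abs{c_{n,p,q}}=1/\sqrt q$, the discrepancy is harmless for the support arguments that follow, but as a proof of the Proposition as written your argument relies on a false identity and a compensating misreading of the definition of $c_{n,p,q}$. Two smaller points: the left-hand exponent in \eqref{eq:zak} should read $e^{-i\pi\frac pq t^2}$ (you corrected this silently), and your claim that $A_{p,q}(x)$ is a.e. a complete residue system modulo $q$ needs $p\neq0$ (for $p=0$ both endpoints are always attained and the set has $q+1$ elements).
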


\begin{remark}
This formula is stated in \cite{jans} without proof in the following form
\begin{equation}
\label{eq:zakdistri}
Z\gamma_{-ip/q} (x,\xi)=\gamma_{-ip/q} \sum_{n \in \Z} c_{n,p,q} \delta\left(\xi-\frac{p}{q}x -\frac{1}{2} p -\frac{n}{q} \right)
\end{equation}
where $\delta$ is the Dirac delta function.
When interpreted in the sense of tempered distributions, this formula
is precisely \eqref{eq:zak}. Our first aim here is to provide a rigorous proof of this formula.
\end{remark}

\begin{proof}
Define 
\[
Z_{N,N'} \gamma_{-ip/q} (x,\xi)= \sum_{k=-N}^{N'} \gamma_{-ip/q}(x+k) e^{-2i\pi k \xi}.
\]
Let $f\in\ss(\R)$. Note that the series defining the Zak transform of $f$ is uniformly convergent, thus
\begin{eqnarray*}
\scal{Zf,Z_{N,N'} \gamma_{-ip/q}}_{L^2(Q)}&=&
\int_0^1\int_0^1\sum_{j\in\Z}\sum_{k=-N}^{N'} f(x+j)\gamma_{ip/q}(x+k) e^{-2i\pi j \xi}e^{2i\pi k \xi}\,\mbox{d}\xi\,\mbox{d}x\\
&=&\sum_{k=-N}^{N'}\int_0^1\sum_{j\in\Z} f(x+j)\gamma_{ip/q}(x+k) \int_0^1e^{2i\pi (k-j) \xi}\,\mbox{d}\xi\,\mbox{d}x\\
&=&\sum_{k=-N}^{N'}\int_0^1 \gamma_{ip/q}(x+k)f(x+k)\,\mbox{d}x\\
&=&\int_{-N}^{N'+1}  f(t)\gamma_{ip/q}(t)\,\mbox{d}t. 
\end{eqnarray*}

It follows that
\begin{equation}
\label{eq:lim1}
\lim_{N,N'\to+\infty}\scal{Zf,Z_{N,N'} \gamma_{-ip/q}}_{L^2(Q)}
=\int_{\R}  f(t)\gamma_{ip/q}(t)\,\mbox{d}t.
\end{equation}

On the other hand,
\begin{eqnarray*}
Z_{N,N'} \gamma_{-ip/q} (x,\xi)&=& \sum_{j=-N}^{N'} e^{i\pi \frac{p}{q} (x+j)^2-2i\pi j \xi}\\
             &=& e^{i\pi \frac{p}{q} x^2} \sum_{j=-N}^{N'} e^{i \pi \frac{p}{q} j^2} e^{2i\pi j\left(\frac{p}{q}  x - \xi\right)}.
\end{eqnarray*}
We then split this sum according to the value of $j$ modulo $2q$.
In order to do so, note that, if $j=2ql+k$, $\pi\frac{p}{q} j^2=\frac{p}{q} k^2 \pmod {2\pi}$
so that $e^{i \pi \frac{p}{q} j^2}=e^{i \pi \frac{p}{q} k^2}$. 

Without loss of generality, we will now assume that$N=2qM$ and $N'=2q(M+1)-1$. 
Then, every $j\in\{-N,\ldots,N'\}$ decomposes uniquely as $j=2q\ell+k$ with $\ell\in\{-M,\ldots,M\}$
and $k\in\{0,\ldots,2q-1\}$. Therefore
\begin{eqnarray*}
Z_{N,N'} \gamma_{-ip/q}(x,\xi)
	&=&e^{i\pi \frac{p}{q} x^2} \sum_{\ell=-M}^M \sum_{k=0}^{2q-1} e^{i \pi \frac{p}{q} k^2} 
	e^{2i\pi  (2q\ell+k)\left(\frac{p}{q}x - \xi\right)} \\
  &=&e^{i\pi \frac{p}{q} x^2}\sum_{k=0}^{2q-1} e^{i \pi \left(\frac{p}{q} k^2+ 2 k \left(\frac{p}{q}x-\xi\right) \right)}
		\sum_{\ell=-M}^M e^{4 i\pi \ell(px - q\xi)}.
\end{eqnarray*}
We are thus lead to introduce the Dirichlet kernel
\[
D_M(u)=\sum_{\ell=-M}^M e^{2i\pi u}
\]
and
\[
P(x,\xi)=e^{-i\pi \frac{p}{q} x^2}\sum_{k=0}^{2q-1} e^{-i \pi \left(\frac{p}{q} k^2+ 2 k \left(\frac{p}{q}x-\xi\right) \right)}.
\]
so that
$$
\overline{Z_{N,N'} \gamma_{-ip/q}(x,\xi)}=P(x,\xi),D_M\bigl(2(px-q\xi)\bigr)
$$

We have thus established that, if $N=2qM$, $N'=2q(M+1)-1$ and $f\in \ss(\R)$, then
\begin{eqnarray}
\scal{Zf,Z_{N,N'} \gamma_{ip/q}}_{L^2(Q)}
&=&\iint_{[0,1]^2}  Zf(x,\xi)\overline{Z_N \gamma_{ip/q}(x,\xi)}\,\de x\,\de\xi\nonumber\\
&=&\iint_{[0,1]^2} Zf(x,\xi)  P(x,\xi)\,D_M\bigl(2(px-q\xi)\bigr)\,\de x\,\de\xi
\label{eq:lim2}
\end{eqnarray}

Write $\ffi(x,\xi)=Zf(x,\xi)  P(x,\xi)$. Note that, as $f\in\ss(\R)$, $\ffi\in\cc^\infty(\R^2)$.
We now want to evaluate the limit of
$$
\iint_{Q} \varphi(x,\xi) D_M(2(px-q\xi))\,\mbox{d}x\,\mbox{d}\xi
$$
when $M \rightarrow \infty$.
Let us first compute the integral with respect to $\xi$:
\begin{eqnarray*}
\int_0 ^1 \varphi(x,\xi) D_M(2(px-q\xi))\,\de \xi 
&=& \frac{1}{2q}\int_0^{2q} \varphi\left(x,\frac{\zeta}{2q} \right) D_M(2px - \zeta)\ ,\de\zeta \\
&=& \sum_{\ell=0}^{2q-1} \frac{1}{2q} \int_\ell^{\ell+1} \varphi \left( x, \frac{\zeta}{2q} \right) D_M(2px-\zeta)\,\de\zeta \\
&=& \sum_{\ell=0}^{2q-1} \frac{1}{2q} \int_0^1 \varphi\left( x,\frac{\zeta+\ell}{2q} \right) D_M(2px -\zeta)\,\de \zeta .
\end{eqnarray*}
As $D_M$ is $1$-periodic, we may remplace $2px$ by $2px-\lfloor 2px \rfloor$, thus
\begin{eqnarray*}
\int_0 ^1 \varphi(x,\xi) D_M(2(px-q\xi))\,\de\xi 
&=& \sum_{\ell=0}^{2q-1} \frac{1}{2q} \int_0^1 
\varphi\left( x,\frac{\zeta+\ell}{2q} \right) D_M(2px-\lfloor 2px \rfloor -\zeta)\,\de\zeta \\
&=& \left(\frac{1}{2q}\sum_{\ell=0}^{2q-1}   \varphi\left(x,\frac{\zeta+\ell}{2q}\right) \right)*_{\zeta} D_M  (2px-\lfloor 2px \rfloor)
\end{eqnarray*}
where $*$ is the (circular) convolution on $[0,1]$ and where we have used the parity of $D_M$.

But now, for every $x$, define $\psi_x=\dst\frac{1}{2q}\sum_{\ell=0}^{2q-1}   \varphi\left(x,\frac{\zeta+\ell}{2q}\right)$
and note that $\psi_x$ is a smooth $1$-periodic function. Moreover, $\psi_x * D_M$ is the partial sum of order $M$
of the Fourier series of $\psi_x$. 
Therefore $\psi_x*D_M\to\psi_x$ uniformly, in particular at the point $2px-\lfloor 2px \rfloor$.
It follows that
\begin{eqnarray*}
\int_0 ^1 \varphi(x,\xi) D_M(2(px-q\xi))d\xi   &\underset{M\to+\infty}{\longrightarrow}&
\sum_{l=0}^{2q-1} \frac{1}{2q} \ffi \left(x,\frac{2px-\lfloor 2px \rfloor +l}{2q} \right)\\
&=&\sum_{l=-\lfloor 2px \rfloor}^{2q-1-\lfloor 2px \rfloor} \frac{1}{2q} \ffi \left(x,\frac{2px +l}{2q} \right).
\end{eqnarray*}

Replacing $\ffi$ by $Zf\,P$ and using \eqref{eq:lim2} we have thus established that
\begin{equation}
\scal{Zf,Z_N \gamma_{-ip/q}}_{L^2(Q)}\underset{M\to+\infty}{\longrightarrow}
\int_0^1 \sum_{l=-\lfloor 2px \rfloor}^{2q-1-\lfloor 2px \rfloor} \frac{1}{2q}
Zf\left(x,\frac{2px +l}{2q}\right)  P\left(x,\frac{2px +l}{2q}\right) \ \de x.
\label{eq:lim3}
\end{equation}

We will now reshape this formula. Let us first simplify $P$:
\begin{eqnarray*}
e^{i\pi \frac{p}{q} x^2}P\left(x,\frac{2px-\ell}{2q}\right)&=& \sum_{k=0}^{2q-1} e^{i \pi \left(\frac{p}{q} k^2 - \frac{k \ell}{q} \right)}
=
\sum_{k=0}^{q-1}+\sum_{k=q}^{2q-1} e^{i \pi \left(\frac{p}{q} k^2-  \frac{k \ell}{q} \right)} \\
&=& \sum_{k=0}^{q-1} \left(  e^{i \pi \left(\frac{p}{q} k^2 -  \frac{k \ell}{q} \right)} 
+  e^{i \pi \left(\frac{p}{q} (k+q)^2 -  \frac{(k+q)k \ell}{q} \right)}  \right) \\
&=& \sum_{k=0}^{q-1} e^{i \pi \left(\frac{p}{q} k^2 -  \frac{k \ell}{q} \right)}
\left( 1+ e^{i \pi(pq - l)} \right).
\end{eqnarray*}
As
$$
1+ e^{i \pi(pq-l)}=1+(-1)^{pq+l}=\begin{cases}2&\mbox{if }l=pq \pmod{2}\\
0&\mbox{otherwise}\end{cases},
$$
and, writing $l-pq=2n$, we have $\frac{p}{q} k^2 -  \frac{k \ell}{q} =\frac{p}{q} k^2 -  2\frac{n k}{q} - kp$,
we deduce that
\[
P\left(x,\frac{2px-\ell}{2q}\right)= e^{-i\pi \frac{p}{q} x^2}\sum_{k=0}^{q-1} 2(-1)^{kp} e^{i\pi \left( \frac{p}{q} k^2 -  2\frac{n k}{q} \right)} =2q c_{n,p,q}e^{-i\pi \frac{p}{q} x^2}
\] 
Finally, the condition
$-\lfloor 2px\rfloor \ieg k \ieg 2q-1-\lfloor 2px\rfloor$ is equivalent to $\xi_{n,p,q}(x):=\frac{p}{q}x+\frac{l}{2}\in[0,1]$.
Therefore \eqref{eq:lim3} reads
\begin{equation}
\label{eq:lim4}
\scal{Zf,Z_N \gamma_{-ip/q}}_{L^2(Q)}\underset{M\to+\infty}{\longrightarrow}
  \int_0^1 e^{-i \pi \frac{p}{q} x^2} \sum_{ n \in A_{p,q}(x) } 
c_{n,p,q}\  Zf \bigl(x,\xi_{n,p,q}(x) \bigr)\,\de x.
\end{equation}
It remains to compare this equation with \eqref{eq:lim1} to obtain that,
for $f\in\ss(\R)$,
$$
\int_{\R}  f(t)e^{-i\frac{p}{q}t^2}\,\mbox{d}t=
\int_0^1 e^{-i \pi \frac{p}{q} x^2} \sum_{ n \in A_{p,q}(x) } 
c_{n,p,q}\  Zf \left(x,\xi_{n,p,q}(x) \right)\ \de x,
$$
as announced.
\end{proof}

We can now prove our main theorem

\begin{theorem}\label{th:zakmargin}\ \\
Let $p\in\Z\setminus\{0\}$, $q\in\N$ be relatively prime integers and let $\alpha$ be defined by $\cot\alpha=\frac{p}{q}$.
For $n\in\Z$, and $x\in[0,1]$, let
$c_{n,p,q}$, $\xi_{n,p,q}$ and $A_{ p,q}(x)$ be defined as in Proposition \ref{prop:zakchirp}.
Then, for every $f\in L^2(\R)$,
\begin{equation}
\label{eq:zakfrft0}
\ff_\alpha f(\xi)=c_\alpha e^{-i\pi\frac{p}{q}\xi^2}\int_0^1 e^{-2i\pi\frac{\xi}{\sin\alpha} x-i \pi \frac{p}{q} x^2} 
\sum_{ n \in A_{p,q}(x) } 
c_{n,p,q}\  Zf \left(x,\frac{\xi}{\sin\alpha}+\xi_{n,p,q}(x) \right)\,\de x.
\end{equation}
The identity has to be taken in the $L^2(\R)$ sense.
\end{theorem}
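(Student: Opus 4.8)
\textit{Proof strategy.} The plan is to prove \eqref{eq:zakfrft0} first as a pointwise identity for $f\in\ss(\R)$, by reducing it to Proposition~\ref{prop:zakchirp}, and then to extend it to all $f\in L^2(\R)$ by a density argument, which requires knowing that the right-hand side of \eqref{eq:zakfrft0} defines a bounded operator on $L^2(\R)$. The only nontrivial point will be this last boundedness.

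\medskip

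\noindent\emph{The pointwise identity on $\ss(\R)$.} Fix $f\in\ss(\R)$ and $\xi\in\R$. Since $\cot\alpha=\frac{p}{q}$, the (absolutely convergent) defining integral of $\ff_\alpha$ rewrites as
\[
\ff_\alpha f(\xi)=c_\alpha e^{-i\pi\frac{p}{q}\xi^2}\int_\R e^{-i\pi\frac{p}{q}x^2}f(x)e^{-2i\pi x\xi/\sin\alpha}\,\de x
=c_\alpha e^{-i\pi\frac{p}{q}\xi^2}\int_\R g_\xi(x)\,e^{-i\pi\frac{p}{q}x^2}\,\de x,
\]
where $g_\xi(x)=f(x)e^{-2i\pi x\xi/\sin\alpha}$. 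Multiplication by a unimodular character maps $\ss(\R)$ into itself, so $g_\xi\in\ss(\R)$ and Proposition~\ref{prop:zakchirp} applies to $g_\xi$. It then remains to express $Zg_\xi$ through $Zf$, and a one-line computation from the series defining the Zak transform gives the modulation covariance
\[
Zg_\xi(x,\eta)=e^{-2i\pi x\xi/\sin\alpha}\,Zf\Bigl(x,\,\eta+\tfrac{\xi}{\sin\alpha}\Bigr).
\]
Substituting this into \eqref{eq:zak} written for $g_\xi$, and multiplying through by $c_\alpha e^{-i\pi\frac{p}{q}\xi^2}$, produces exactly \eqref{eq:zakfrft0}, for every $\xi$. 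For $f\in\ss(\R)$ all sums and integrals converge absolutely, since $Zf\in\cc^\infty(\R^2)$ and $\#A_{p,q}(x)\leq q+1$ for every $x$. This step is a bookkeeping of phase factors.

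\medskip

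\noindent\emph{Passage to $L^2(\R)$.} The right-hand side of \eqref{eq:zakfrft0} depends on $f$ only through $Zf$, so write it as $SZf$, where for $F\in L^2(Q)$, extended $1$-periodically in the second variable,
\[
SF(\xi)=c_\alpha e^{-i\pi\frac{p}{q}\xi^2}\int_0^1 e^{-2i\pi x\xi/\sin\alpha-i\pi\frac{p}{q}x^2}
\sum_{n\in A_{p,q}(x)}c_{n,p,q}\,F\Bigl(x,\,\tfrac{\xi}{\sin\alpha}+\xi_{n,p,q}(x)\Bigr)\,\de x .
\]
Since $Z\colon L^2(\R)\to L^2(Q)$ and $\ff_\alpha$ are both unitary, it suffices to show that $S\colon L^2(Q)\to L^2(\R)$ is bounded: then $SZ$ and $\ff_\alpha$ are bounded operators agreeing on the dense subspace $\ss(\R)$, hence they coincide on $L^2(\R)$, which is precisely the meaning of \eqref{eq:zakfrft0} in the $L^2$ sense.

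\medskip

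\noindent\emph{Boundedness of $S$, and the main obstacle.} Here the rationality of $\cot\alpha$ is used decisively: for each fixed $\xi$ the integrand sees $F$ only on the at most $q+1$ oblique segments $x\mapsto\bigl(x,\tfrac{\xi}{\sin\alpha}+\xi_{n,p,q}(x)\bigr)$, $n\in A_{p,q}(x)$. Put $\psi(x,\omega)=e^{-i\pi\frac{p}{q}x^2}\sum_{n\in A_{p,q}(x)}c_{n,p,q}\,F(x,\omega+\xi_{n,p,q}(x))$; using $\abs{c_{n,p,q}}\leq1$, $\#A_{p,q}(x)\leq q+1$ and the periodicity of $F(x,\cdot)$, Cauchy--Schwarz gives $\norm{\psi}_{L^2(Q)}\leq(q+1)\norm{F}_{L^2(Q)}$. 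Now $\psi$ is $1$-periodic in $\omega$; expanding it in Fourier series, $\psi(x,\omega)=\sum_{m\in\Z}a_m(x)e^{2i\pi m\omega}$ with $a_m$ supported in $[0,1]$ and $\sum_m\norm{a_m}_2^2=\norm{\psi}_{L^2(Q)}^2$, one computes $SF(\xi)=c_\alpha e^{-i\pi\frac{p}{q}\xi^2}\sum_{m\in\Z}e^{2i\pi m\xi/\sin\alpha}\,\widehat{a_m}(\xi/\sin\alpha)$, where $\widehat{a_m}$ is the Fourier transform of the compactly supported $a_m$. The change of variable $\omega=\xi/\sin\alpha$ and the relation $\abs{c_\alpha}^2\abs{\sin\alpha}=1$ then give
\[
\norm{SF}_{L^2(\R)}^2=\int_\R\Bigl|\,\sum_{m\in\Z}e^{2i\pi m\omega}\,\widehat{a_m}(\omega)\,\Bigr|^2\de\omega .
\]
Since each $a_m$ is supported in $[0,1]$, the functions $\omega\mapsto e^{2i\pi m\omega}\widehat{a_m}(\omega)$ are almost orthogonal in $L^2(\R)$: the product $\widehat{a_m}\,\overline{\widehat{a_{m'}}}$ is the Fourier transform of the cross-correlation of $a_m$ and $a_{m'}$, which is supported in $[-1,1]$, so the $m,m'$ inner product (that correlation evaluated at $m-m'$) vanishes unless $\abs{m-m'}\leq1$. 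A Cauchy--Schwarz estimate on the three surviving diagonals yields $\norm{SF}_{L^2(\R)}^2\leq3\sum_m\norm{a_m}_2^2=3\norm{\psi}_{L^2(Q)}^2\leq3(q+1)^2\norm{F}_{L^2(Q)}^2$, so $S$ is bounded; the rearrangements above are justified by first using the Fourier partial sums of $\psi$ and then passing to the limit, precisely by this estimate. This boundedness is the step I expect to be the main obstacle. A Cauchy--Schwarz performed directly in $x$ inside $SF(\xi)$ is useless, since it bounds $\norm{SF}_{L^2}^2$ by the $L^2$ norm of a function periodic in $\omega$, which is infinite; one must exploit the oscillation of the factor $e^{-2i\pi x\xi/\sin\alpha}$, and it is the interplay of this oscillation with the compact support of the $a_m$ --- ultimately a reflection of the fact that the Zak transform lives on the compact square $Q$ --- that produces the near-orthogonality, and with it the bound. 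By contrast, the first two steps are essentially formal once Proposition~\ref{prop:zakchirp} is available.
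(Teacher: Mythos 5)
Your proof is correct, and its first half (applying Proposition \ref{prop:zakchirp} to the modulated function $g_\xi(x)=f(x)e^{-2i\pi x\xi/\sin\alpha}$ and using the covariance $Zg_\xi(x,\eta)=e^{-2i\pi x\xi/\sin\alpha}Zf(x,\eta+\xi/\sin\alpha)$) is exactly what the paper does, with $\omega=\xi/\sin\alpha$. Where you diverge is in the $L^2$-boundedness step, which you correctly identify as the only real content of the extension. The paper first splits the sum over $n$ into finitely many pieces $F_n$ (with $n$ ranging over a fixed finite set and $x$ restricted to a set $B_{p,q}(n)$), then computes $\norm{F_n[f]}_{L^2(\R)}^2=\int_0^1\sum_{k\in\Z}|F_n[f](\eta+k)|^2\,\de\eta$ and recognizes, for fixed $\eta$, the values $F_n[f](\eta+k)$ as the Fourier coefficients in $x$ of a single function in $L^2(Q)$; Parseval for Fourier series then gives the bound in terms of $\norm{Zf}_{L^2(Q)}$. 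You instead keep the sum over $n$ inside a single periodic function $\psi$, expand $\psi$ in a Fourier series in the frequency variable, and exploit the fact that the coefficients $a_m$ are supported in $[0,1]$ to get almost-orthogonality of the modulated transforms $e^{2i\pi m\omega}\widehat{a_m}(\omega)$ in $L^2(\R)$. These are dual implementations of the same underlying Plancherel identity (yours works on the Fourier side of the paper's decomposition $\omega=\eta+k$), and both yield an explicit operator bound; the paper's version is slightly more economical in that the orthogonality is exact rather than "up to three diagonals," while yours has the advantage of making the role of the compactness of $Q$ and of the oscillating factor completely explicit, and of not requiring the auxiliary sets $B_{p,q}(n)$. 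Your remark that a direct Cauchy--Schwarz in $x$ must fail because the integrand is only periodic in $\omega$ is exactly the point the paper's Plancherel step is designed to get around. One cosmetic note: since the $a_m$ are supported in the closed interval $[0,1]$, the cross-correlations actually vanish at $\pm1$ as well, so even the $|m-m'|=1$ terms drop out and your constant $3$ can be replaced by $1$; this changes nothing in the argument.
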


Note that $\sin^2\alpha=\cot^2\alpha\cos^2\alpha=(1-\sin^2\alpha)\frac{p^2}{q^2}$ thus $\sin^2\alpha=\frac{p^2}{p^2+q^2}$
and as $\sgn \sin\alpha=\sgn\cot\alpha=\sgn p$, $\sin\alpha=\frac{p}{\sqrt{p^2+q^2}}$
so that \eqref{eq:zakfrft0} can easily be written in terms of $p,q$ only.

\begin{proof}
Let us now fix $\omega\in\R$ and $f\in\ss(\R)$ and define $f_\omega$ as $f_\omega(t)=f(t)e^{-2i\pi\omega t}$.
Then $Zf_\omega(x,\xi)=Zf(x,\xi+\omega)e^{-2i\pi\omega x}$. Thus, applying \eqref{eq:zak} to $f_\omega$ leads to
\begin{equation}
\label{eq:zakfrft1}
\int_{\R} f(t) e^{-i\frac{p}{q}t^2}e^{-2i\pi\omega t}\,\de t 
=  \int_0^1 e^{-2i\pi\omega x-i \pi \frac{p}{q} x^2} \sum_{ n \in A_{p,q}(x) } 
c_{n,p,q}\  Zf \left(x,\omega+\xi_{n,p,q}(x) \right)\,\de x.
\end{equation}
But
$$
\ff_\alpha f(\xi)=c_\alpha e^{-i\pi\frac{p}{q}\xi^2}\int_{\R} f(t) e^{-i\frac{p}{q}t^2-2i\pi\xi t/\sin\alpha}\,\de t 
$$
so that \eqref{eq:zakfrft1} reads
\begin{equation}
\label{eq:zakfrft2}
\ff_\alpha f(\xi)=c_\alpha e^{-i\pi\frac{p}{q}\xi^2}\int_0^1 e^{-2i\pi\frac{\xi}{\sin\alpha} x-i \pi \frac{p}{q} x^2} \sum_{ n \in A_{p,q}(x) } 
c_{n,p,q}\  Zf \left(x,\frac{\xi}{\sin\alpha}+\xi_{n,p,q}(x) \right)\,\de x.
\end{equation}

It remains to extend this identity from $\ss(\R)$ to $L^2(\R)$. For sake of simplicity, we will assume that $p\geq 0$.

As $\ff_\alpha$ is continuous from $L^2(\R)\to L^2(\R)$, it is enough to check that the right hand side of
\eqref{eq:zakfrft2} is continuous as well.

To start, multiplication by bounded functions and dilations are continuous from $L^2(\R)\to L^2(\R)$ so that 
it is enough to check that the functional $F$ defined by
$$
F[f](\xi)=\int_0^1 e^{-2i\pi\xi x-i \pi \frac{p}{q} x^2} \sum_{ n \in A_{p,q}(x) } 
c_{n,p,q}\  Zf \left(x,\xi+\xi_{n,p,q}(x) \right)\,\de x
$$
is continuous from $L^2(\R)\to L^2(\R)$.

Let us first re-write this functional. 
To do so, note that $n\in A_{p,q}(x)$ if and only if 
$-pq/2-p\leq-p(x+q/2)\leq n\leq -p(x+q/2)+q\leq -pq/2+q$. 
Let $\ff_{p,q}=\Z\cap[pq/2-p,pq/2+q]$ then, for each $n\in \ff_{p,q}$ there is a subset $B_{p,q}(n)$ of $[0,1]$ such that
$n\in A_{p,q}(x)$ if and only if $x\in B_{p,q}(n)$.
Then $F[f]=\dst\sum_{n\in \ff_{p,q}} c_{n,p,q} F_n[f]$
where 
$$
F_n[f](\omega)=\int_{B_{p,q}(n)} e^{-2i\pi\omega x}e^{-i \pi \frac{p}{q} x^2}
Zf \left(x,\omega+\xi_{n,p,q}(x) \right)\,\de x.
$$
As $\ff_{p,q}$ is finite, it is enough to
show that each $F_n$ is continuous. Further, if $\omega=\eta+k$ with $k\in\Z$ and $\eta\in[0,1)$ then
\begin{eqnarray*}
F_n[f](\eta+k)&=&\int_{B_{p,q}(n)} e^{-2i\pi\eta x}e^{-i \pi \frac{p}{q} x^2}
Zf \left(x,\eta+\xi_{n,p,q}(x) \right)e^{-ikx}\,\de x\\
&=&\int_0^1 \mathcal{Z}(x,\eta)e^{-ikx}\,\de x
\end{eqnarray*}
with
$$
\mathcal{Z}(x,\eta)=\mathbf{1}_{B_{p,q}(n)}(x)e^{-i \pi \frac{p}{q} x^2}Zf \left(x,\eta+\xi_{n,p,q}(x) \right).
$$
Thus
\begin{eqnarray*}
\norm{F_n}^2_{L^2(\R)}&=&\int_0^1\sum_{k\in\Z}|F_n[f](\eta+k)|^2\,\de\eta\\
&=&\int_0^1\int_0^1|\mathcal{Z}(x,\eta)|^2\,\de x\,\de\eta
\end{eqnarray*}
with Plancherel.

It remains to notice that, for any fixed compact set $A\subset\R^2$, $f\to Z[f]$ is bounded $L^2(\R)\to L^2(A)$
thus $f\to Zf \left(x,\eta+\xi_{n,p,q}(x) \right)$ is bounded $L^2(\R)\to L^2(Q)$ and so is $f\to\mathcal{Z}$.
\end{proof}

\subsection{Application to the generalized Pauli problem}
\label{sec:pauli}
Fix $p\in\Z$, $q\in\N$ relatively prime and $\alpha$ given by $\cot\alpha=\frac{p}{q}$.

Let $\T=\R/\Z$. We will identify $\T$ with $[0,1)$. Consider the line $\Gamma_{p,q}(\xi)$
of slope $\dst\frac{p}{q}$ through the point $\left(0,\frac{p}{2}+\frac{\xi}{\sin\alpha}\right)$ in $\T^2$.
Note that this line has finite length $\sqrt{p^2+q^2}$ since its slope is rational. This well known fact can be proved as follows:
first the length does not depend on the starting point so we may assume that $\frac{p}{2}+\frac{\xi}{\sin\alpha}=0$.
Then the line is the periodization of the segment starting at $(0,0)$ of slope $p/q$ reaching the first point with integer coordinates,
that is $(p,q)$.

Let $\pi\,:\T^2\to\T$ be the projection on the second coordinate $\pi(x,v)=v$ where we identify $\T=[0,1)$.
For $(x,\omega)\in\Gamma_{p,q}(\xi)$, let $n(x,\omega)\in\Z$ be the number defined as follows:
$\omega=x+\frac{p}{2}+\frac{\xi}{\sin\alpha}+n(x,\omega)$. Then we may write \eqref{eq:zakfrft0} as
$$
\ff_\alpha f(\xi)=\frac{q c_\alpha}{\sqrt{p^2+q^2}} e^{-i\pi\left(\frac{p}{q}+\frac{q}{p\sin^2\alpha}\right)\xi^2}
\int_{\Gamma_{p,q}(\xi)} e^{-i\pi\frac{q}{p}\left(\pi(u)-\frac{p}{2}\right)^2} c_{n(u),p,q}\  Zf (u)\,\de u.
$$
In particular, this identity allows to relate the support of $\ff_\alpha f$ to the support of $Z[f]$:
\begin{equation}
\label{eq:suppzak}
\supp\ff_\alpha f\subset\{\xi\,:\Gamma_{p,q}(\xi)\cap\supp Zf\not=\emptyset\}.
\end{equation}

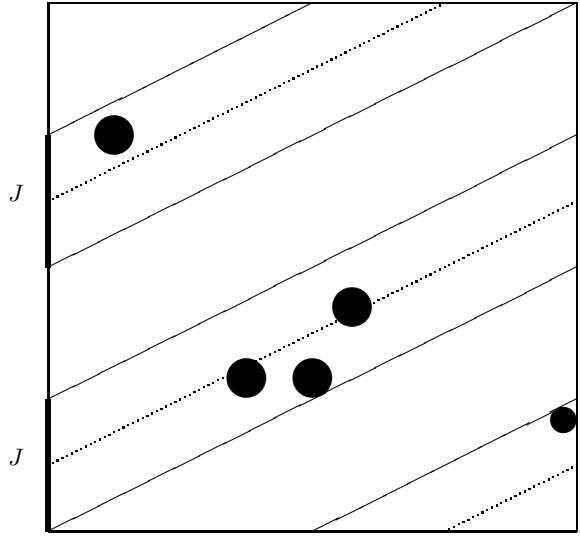
\begin{figure}
\begin{center}
\begin{picture}(250,250)
\drawline(25,25)(25,225)
\drawline(25,225)(225,225)
\drawline(225,25)(225,225)
\drawline(225,25)(25,25)
\drawline(25,25)(225,125)
\drawline(25,75)(225,175)
\drawline(25,125)(225,225)
\drawline(25,175)(125,225)
\drawline(125,25)(225,75)
\dottedline{2}(25,50)(225,150)
\dottedline{2}(25,150)(175,225)
\dottedline{2}(225,50)(175,25)

\put(125,83){\circle*{20}}
\put(100,83){\circle*{20}}
\put(140,110){\circle*{20}}
\put(50,175){\circle*{30}}
\put(220,67){\circle*{10}}

\put(10,50){$J$}
\put(10,150){$J$}

\linethickness{0.6mm}
\put(25,25){\line(0,1){50}}
\put(25,125){\line(0,1){50}}
\end{picture}
\caption{In this figure $p=2,q=1$. The support of $Z[f]$ is the union of the black discs. Then the set $J$
is such that, for each $\eta\in J$, the (periodic) line starting at $(0,\eta)$ with slope $q/p=1/2$ ($\cotan\alpha=p/q$)
intersects at least once the support of $Zf$ (one such line is drawn dotted). Here $J=[0,1/4]\cup[1/2,3/4]$
Finally, the support of $\ff_\alpha[f]$
is then included in the set of all $\xi$ such that $\xi/\sin\alpha+p/2\in J+\Z=\bigcup_{k\in\Z}[k/2,(k+1/2)/2]$.
As $p=2,q=1$, $\sin\alpha=2/\sqrt{5}$, thus $\supp\ff_\alpha[f]\subset\bigcup_{k\in\Z}[k/\sqrt{5},(k+1/2)/\sqrt{5}]$.}
\label{fig:diffplanobj}
\end{center}
\end{figure}

We can now extend the idea of Janssen \cite{janss2} to provide counterexamples for the generalized Pauli problem.

\begin{corollary}
\label{cor:supp}
Let $r_1,\ldots,r_m\in\Q$ be $m$ different rational numbers and $N\in\N$. For $j=1,\ldots,m$ let $\alpha_j\in[0,\pi)$ be defined
via $\cot\alpha_j=r_j$. Then there exists $f_1,\ldots,f_N\in L^2(\R)$ such that,
for $j=1,\ldots,m$ and $k\not=\ell=1,\ldots,N $,
\begin{equation}
\label{eq:corsupp}
\supp \ff_{\alpha_j}f_k\cap\supp\ff_{\alpha_j}f_\ell=\emptyset.
\end{equation}

In particular, for every $j=1,\ldots,m$ and for every $c_1,\ldots,c_N\in\C$ with $|c_k|=1$, 
\begin{equation}
\label{eq:pauli}
\abs{\ff_{\alpha_j}\ent{\sum_{k=1}^mc_kf_k}}=\abs{\ff_{\alpha_j}\ent{\sum_{k=1}^mf_k}}
\end{equation}
so that $\{Q_{\alpha_j},j=1,\ldots,m\}$ is not informationnaly complete with
respect to pure states.
\end{corollary}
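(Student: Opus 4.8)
The plan is to build the $f_k$ so that each Zak transform $Zf_k$ is concentrated near a single point $P_k=(x_k,v_k)$ of $\T^2$, and then to read off $\supp\ff_{\alpha_j}f_k$ from the oblique marginal relation \eqref{eq:zakfrft0}, or more directly from the support inclusion \eqref{eq:suppzak}. Since $Z$ is a unitary isomorphism of $L^2(\R)$ onto $L^2(Q)$, given points $P_1,\dots,P_N\in(0,1)^2$ (to be chosen) and $\epsilon>0$ small enough that the closed balls $\overline{B}(P_k,\epsilon)$ are pairwise disjoint and contained in $Q$, I pick nonzero $g_k\in L^2(Q)$ supported in $\overline{B}(P_k,\epsilon)$ and set $f_k=Z^{-1}g_k$. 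Then $f_k\neq0$, $Zf_k=g_k$, and the $f_k$ are linearly independent because the $g_k$ have pairwise disjoint supports and $Z$ is injective.

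The next step is to locate $\supp\ff_{\alpha_j}f_k$. Write $r_j=p_j/q_j$ in lowest terms. The oblique line $\Gamma_{p_j,q_j}(\xi)$ of \eqref{eq:suppzak} meets a point $(x,\omega)\in\T^2$ exactly when $\omega-\frac{p_j}{q_j}x-\frac{p_j}{2}-\frac{\xi}{\sin\alpha_j}\in\frac1{q_j}\Z\pmod 1$; since $\|(x,\omega)-P_k\|\le\epsilon$ forces $\omega-\frac{p_j}{q_j}x$ to lie within $C_j\epsilon$ of $v_k-\frac{p_j}{q_j}x_k$ for a constant $C_j$ depending only on $p_j,q_j$, the inclusion \eqref{eq:suppzak} gives that $\supp\ff_{\alpha_j}f_k$ is contained in a $C_j'\epsilon$-neighbourhood of the arithmetic progression $S_{j,k}+\frac{\sin\alpha_j}{q_j}\Z$, where $S_{j,k}=\sin\alpha_j\bigl(v_k-r_jx_k-\frac{p_j}{2}\bigr)$ and $C_j'=|\sin\alpha_j|C_j$. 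Because $S_{j,k}-S_{j,\ell}=\sin\alpha_j\bigl[(v_k-v_\ell)-r_j(x_k-x_\ell)\bigr]$, these two neighbourhoods for $k\neq\ell$ are disjoint as soon as $\epsilon$ is small compared with $\mathrm{dist}\bigl((v_k-v_\ell)-r_j(x_k-x_\ell),\frac1{q_j}\Z\bigr)$; this will yield \eqref{eq:corsupp} once the latter distance is positive for every $j$ and every $k\neq\ell$.

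So everything reduces to choosing the $P_k$ with $(v_k-v_\ell)-r_j(x_k-x_\ell)\notin\frac1{q_j}\Z$ for all $j=1,\dots,m$ and all $k\neq\ell$. For a fixed triple $(j,k,\ell)$ the set of bad configurations is the kernel of the continuous group homomorphism $(\T^2)^N\to\T$, $(x_1,v_1,\dots,x_N,v_N)\mapsto q_j(v_k-v_\ell)-p_j(x_k-x_\ell)\pmod 1$, which is nonconstant since $q_j\ge1$; hence this kernel is a proper closed subgroup of $(\T^2)^N$ and so is null for Haar (= Lebesgue) measure. There are finitely many triples, so the union of all the bad sets is still null, and I may choose $(P_1,\dots,P_N)$ in its complement with all $P_k\in(0,1)^2$. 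Fixing such a choice makes $\delta:=\min_{j,\,k\neq\ell}\mathrm{dist}\bigl((v_k-v_\ell)-r_j(x_k-x_\ell),\frac1{q_j}\Z\bigr)$ strictly positive, and then any $\epsilon$ smaller than a suitable fixed multiple of $\delta$ (and small enough that the balls $\overline{B}(P_k,\epsilon)$ are disjoint and lie inside $Q$) completes the construction and proves \eqref{eq:corsupp}.

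Finally, \eqref{eq:pauli} and the failure of ICPS are formal consequences. When the supports of the $\ff_{\alpha_j}f_k$, $k=1,\dots,N$, are pairwise disjoint, $\bigl|\ff_{\alpha_j}[\sum_k c_kf_k]\bigr|=\bigl|\sum_k c_k\ff_{\alpha_j}f_k\bigr|=\sum_k|\ff_{\alpha_j}f_k|$ almost everywhere for any unimodular $c_k$, independently of the $c_k$, so it equals $\bigl|\ff_{\alpha_j}[\sum_k f_k]\bigr|$, which is \eqref{eq:pauli}. Since the $f_k$ are linearly independent, $\sum_k c_kf_k$ is a unimodular multiple of $\sum_k f_k$ only when $c_1=\cdots=c_N$; hence, assuming $N\ge2$ (the only case relevant to the last assertion), the normalisations of $f_1+f_2+\cdots+f_N$ and $-f_1+f_2+\cdots+f_N$ are two distinct pure states with the same $Q_{\alpha_j}$-distribution for every $j=1,\dots,m$, so $\{Q_{\alpha_j}:j=1,\dots,m\}$ is not informationally complete with respect to pure states. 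The only genuine work lies in the second paragraph: turning ``the oblique line meets a small ball'' into ``the parameter lies in a thickened arithmetic progression'', with constants uniform over the finite set of slopes, and checking the balls fit inside $Q$; the substantive input, the oblique marginal identity, is already Theorem~\ref{th:zakmargin}.
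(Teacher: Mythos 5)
Your proof is correct, and it rests on the same substantive input as the paper's argument --- the support inclusion \eqref{eq:suppzak} derived from the oblique marginal property of Theorem~\ref{th:zakmargin} --- and on the same overall scheme: prescribe $Zf_k$ with small, well-placed support so that for each $j$ the sets $\set{\xi\tq\Gamma_{p_j,q_j}(\xi)\cap\supp Zf_k\not=\emptyset}$ are pairwise disjoint in $k$. Where you genuinely diverge is in the selection step. The paper supports $Zf_k$ on whole tubes $K_k=\bigcap_j\Gamma_{p_j,q_j}(I_k)$ over base intervals $I_k\subset(0,1)$ chosen greedily by a packing argument: each tube $\Gamma_{p_j,q_j}(I)$ has area proportional to $|I|$, so as long as the total area used is below $1$ the tubes cannot cover $\T^2$ and there is room for the next interval; disjointness then follows because two lines of the same rational slope in $\T^2$ either coincide or are disjoint. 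You instead localize $Zf_k$ near single points $P_k$, convert ``$\Gamma_{p_j,q_j}(\xi)$ meets $\overline{B}(P_k,\epsilon)$'' into an explicit thickened arithmetic progression in $\xi$, and choose the $P_k$ generically, the bad configurations being a finite union of Haar-null proper closed subgroups of $(\T^2)^N$. Your route costs more bookkeeping (the arithmetic description of the lines, uniformity of constants over the finite set of slopes) but buys an explicit quantitative disjointness criterion --- the distance of $(v_k-v_\ell)-r_j(x_k-x_\ell)$ to $\frac{1}{q_j}\Z$ --- and shows that almost every choice of base points works; the paper's packing argument is softer and shorter. Two shared loose ends, neither fatal: the case $r_j=0$ falls outside Theorem~\ref{th:zakmargin} as stated (which assumes $p\not=0$) and must be covered by the classical marginal property of $Z$; and the final non-ICPS conclusion needs $N\geq2$ together with the linear independence of the $f_k$, which you, unlike the paper, verify explicitly.
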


\begin{proof} 
Of course \eqref{eq:pauli} follows directly from \eqref{eq:corsupp} since then
$$
\ff_{\alpha_j}\ent{\sum_{k=1}^mc_kf_k}(\xi)=\begin{cases}c_k\ff_{\alpha_j}[f_k](\xi)&\mbox{if }\xi\in\supp\ff_{\alpha_j}[f_k]\\
0&\mbox{otherwise}
\end{cases}.
$$

Let us now prove \eqref{eq:corsupp}.
Write each $r_j=p_j/q_j$ as an irreducible fraction with $q_j>0$. Let $L_j=\sqrt{p_j^2+q_j^2}$
and $L=\sum L_j$.

The proof is based on the following observation:
Let $I\subset[0,1)$ be an open interval, $r=p/q\in\Q$
and $\Gamma_{p_j,q_j}(I)=\bigcup_{\xi\in I}\Gamma_{p_j,q_j}(\xi)$. Note that $\Gamma_{p_j,q_j}(I)$ has area $L_j|I|$.
 Note that,
if $J\subset[0,1)$ ia an interval such that $(\{0\}\times J)\cap\Gamma_{p_j,q_j}(I)=\emptyset$ then 
$\Gamma_{p_j,q_j}(I)\cap\Gamma_{p_j,q_j}(J)=\emptyset$. Moreover, the existence of such a $J$ is guarantied when $\Gamma_{p_j,q_j}(I)\not=\T^2$.

Further $\bigcup_j \Gamma_{p_j,q_j}(I)$ has area $\leq\sum_j L_j |I|=L|I|$. If $L|I|<1$ then $\bigcup_j \Gamma_{p_j,q_j}(I)\not=\T^2$.
Therefore there exists $J\subset(0,1)$ such that $J\cap \bigcup_j \Gamma_{p_j,q_j}(I)=\emptyset$ and therefore
$\Gamma_{p_j,q_j}(I)\cap\Gamma_{p_j,q_j}(J)=\emptyset$. Write $I_1=I$ and $I_2=J$.
If further $L(|I_1|+|I_2|)<1$ then $\bigcup_j \Gamma_{p_j,q_j}(I_1)
\cup \bigcup_j \Gamma_{p_j,q_j}(I_2)\not=\T^2$. one can thus find $I_3\subset[0,1)$ such that 
$\Gamma_{p_j,q_j}(I_k)\cap\Gamma_{p_j,q_j}(I_\ell)=\emptyset$ if $k\not=\ell$.

In general, we are thus able to find open intervals $I_1,\ldots,I_m\subset(0,1)$ such that 
\begin{equation}
\label{eq:gammas}
\Gamma_{p_j,q_j}(I_k)\cap\Gamma_{p_j,q_j}(I_\ell)=\emptyset\mbox{ if }k\not=\ell.
\end{equation}
Next, note that $K_k:=\bigcap\Gamma_{p_j,q_j}(I_k)$ has non-empty interior as it contains a neighborhood of $\{0\}\times I_k$. Let
$Z_k$ be any function in $L^2(Q)$ and let $f_k\in L^2(\R)$ be given by $Z[f_k]=Z_k$. Then \eqref{eq:suppzak}
and \eqref{eq:gammas} imply \eqref{eq:corsupp}.
\end{proof}

\section{Approximate Pauli Problem}

\begin{theorem}
Leq $0\leq \alpha_1< \cdots<\alpha_n \leq \frac{\pi}{2} $ and $T>0$.
Let $f_1, ... f_n \in L^\infty(\R)$ with $\supp f_j\subset\ent{-T,T}$ and $f_j\geq0$. Then, for every $\eps>0$, there exists a function $\ffi \in L^2(\R)$ 
such that, for every $k=1,\ldots, n$,
$$
\norm{\abs{\ff_{\alpha_k}[\ffi]}-f_k}_{L^\infty(\ent{-T,T})} \leq \eps.
$$ 
\end{theorem}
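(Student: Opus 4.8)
The plan is to realize $\ffi$ as a sum $\ffi=\sum_{k=1}^{n}\psi_k$ of $n$ building blocks, where $\psi_k$ is tuned so that $\ff_{\alpha_k}[\psi_k]$ reproduces $f_k$ exactly, whereas $\ff_{\alpha_j}[\psi_k]$ is uniformly negligible on $[-T,T]$ for every $j\neq k$. The only idea is that a very large modulation decouples the angles: a function supported in $[-T,T]$ and multiplied by $e^{2i\pi c\,t}$ keeps the same $\ff_{\alpha_k}$-modulus, but for any other angle $\alpha_j$ the modulation becomes, after the phase-space rotation by $\alpha_j-\alpha_k$, a large translation of the essential support away from $[-T,T]$. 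Concretely, for real constants $c_1,\dots,c_n$ to be fixed below set
\[
\psi_k=\ff_{-\alpha_k}\!\ent{e^{2i\pi c_k\,\cdot}f_k},\qquad k=1,\dots,n .
\]
Each $f_k$ is bounded with support in $[-T,T]$, hence lies in $L^1(\R)\cap L^2(\R)$, so $e^{2i\pi c_k\,\cdot}f_k\in L^1(\R)\cap L^2(\R)$, $\psi_k\in L^2(\R)$, and $\ffi:=\sum_k\psi_k\in L^2(\R)$. By the group law $\ff_{\alpha_j}\circ\ff_{-\alpha_k}=\ff_{\alpha_j-\alpha_k}$ (applied to the $L^2$ extensions), the diagonal terms are $\ff_{\alpha_k}[\psi_k]=\ff_0\!\ent{e^{2i\pi c_k\,\cdot}f_k}=e^{2i\pi c_k\,\cdot}f_k$, so $\abs{\ff_{\alpha_k}[\psi_k]}=f_k$ everywhere; here the hypothesis $f_k\geq0$ is used.

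The crux is the estimate
\begin{equation}\label{eq:approxdecay}
\norm{\ff_{\alpha_j}[\psi_k]}_{L^\infty([-T,T])}\longrightarrow0\qquad\text{as }\abs{c_k}\to\infty\quad(j\neq k).
\end{equation}
Granting it, observe that $\ff_{\alpha_j}[\psi_k]$ depends only on $c_k$ and not on the other constants, so we may pick each $c_k$ so large that $\norm{\ff_{\alpha_j}[\psi_k]}_{L^\infty([-T,T])}<\eps/n$ for all $j\neq k$; then, a.e.\ on $[-T,T]$,
\[
\Bigl|\,\abs{\ff_{\alpha_j}[\ffi]}-f_j\,\Bigr|
=\Bigl|\,\abs{\ff_{\alpha_j}[\ffi]}-\abs{\ff_{\alpha_j}[\psi_j]}\,\Bigr|
\leq\Bigl|\sum_{k\neq j}\ff_{\alpha_j}[\psi_k]\Bigr|
\leq(n-1)\frac{\eps}{n}<\eps ,
\]
which is the desired conclusion. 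To prove \eqref{eq:approxdecay}, set $\theta=\alpha_j-\alpha_k$; since the $\alpha_i$ are distinct and lie in $[0,\pi/2]$ we have $\theta\in[-\pi/2,\pi/2]\setminus\{0\}$, in particular $\theta\notin\pi\Z$ and $\sin\theta\neq0$. Hence $\ff_\theta$ is given on $L^1(\R)\cap L^2(\R)$ by the integral formula, and, absorbing the Gaussian chirp into $h(t):=e^{-i\pi\cot\theta\,t^2/2}f_k(t)\in L^1(\R)$ (a function of modulus $\abs{f_k}$), substitution yields
\[
\ff_{\alpha_j}[\psi_k](\xi)=\ff_\theta\!\ent{e^{2i\pi c_k\,\cdot}f_k}(\xi)
=c_\theta\,e^{-i\pi\cot\theta\,\xi^2}\,\widehat{h}\!\left(\frac{\xi}{\sin\theta}-c_k\right).
\]
As $h\in L^1(\R)$, $\widehat{h}$ is continuous and vanishes at infinity; when $\xi$ runs through $[-T,T]$ the point $\xi/\sin\theta-c_k$ stays in an interval of fixed length $2T/\abs{\sin\theta}$ whose centre $-c_k$ goes to infinity with $\abs{c_k}$, while $\abs{c_\theta}$ is a fixed constant, so \eqref{eq:approxdecay} follows from the Riemann--Lebesgue lemma.

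I expect no serious obstacle: once one sees that a large modulation separates the angles, the whole statement reduces to the Riemann--Lebesgue lemma, the rest being routine checks that $\psi_k\in L^2$ and that the various operator identities are licit on $L^2$. Two hypotheses are used essentially, namely $f_j\geq0$ (so the diagonal term has modulus exactly $f_j$) and the distinctness of the $\alpha_i$ (so that $\theta\neq0$ and the translation in \eqref{eq:approxdecay} genuinely escapes to infinity). Finally, the argument never uses anything about the diagonal term beyond its modulus, so $\ffi=\sum_k a_k\psi_k$ with arbitrary unimodular $a_k$ satisfies the same estimate; taking $\ffi_k=\psi_k$, this is precisely the sharper statement announced in the introduction.
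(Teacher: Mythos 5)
Your proposal is correct and follows essentially the same route as the paper: decompose $\ffi$ into blocks obtained by applying $\ff_{-\alpha_k}$ to a heavily modulated copy of $f_k$, observe that the diagonal term has modulus exactly $f_k$, and kill the off-diagonal terms $\ff_{\alpha_j-\alpha_k}[e^{2i\pi c_k\cdot}f_k]$ on $[-T,T]$ via the Riemann--Lebesgue lemma applied to the chirped function $h\in L^1$. Your closing remark about replacing $\psi_k$ by $a_k\psi_k$ with unimodular $a_k$ is also exactly the paper's concluding observation.
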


\begin{proof}
It is enough to show that, given $\eps > 0$, for every $j\not=k \in \{1 \cdots n\}$, there exists $\ffi_k \in L^2(\R)$ such that
$\ff_{\alpha_k}[\ffi_k]=f_k$ on $\ent{-T,T}$ and $\norm{\ff_{\alpha_j}[\ffi_k]}_{L^\infty(\ent{-T,T})} \ieg \varepsilon/(n-1)$.
Taking $\ffi=\sum \ffi_k$ then gives the result. 

Without loss of generality, it is enough to construct $\ffi_1$.

Let $\omega\in\R$ be a parameter that we will fix later.
As $f_1\in L^1(\R)$, we may define $h_\omega=\ff_{-\alpha_1} \ent{f_1(t) e^{2i\pi \omega t}}$.
Note that $\abs{\ff_{\alpha_1}[h_\omega]}=f_1$ on $[-T,T]$. Moreover, for $k=2,\ldots,n$, 
$\ff_{\alpha_k} [h_\omega]=\ff_{\alpha_k-\alpha_1} \ent{f_1(t) e^{ 2 i\pi \omega t}}$. 

Let $\tilde\alpha_k=\alpha_k-\alpha_1$. Then, for $\xi \in \R$,
\begin{eqnarray*}
\abs{\ff_{\tilde\alpha_k}\ent{f_1(t) e^{i \omega t}}(\xi) } 
&=& \abs{ c_\alpha \int_\R f_1 (x) e^{-i\pi \cot{\alpha} x^2} e^{2 i \pi x \left(\omega-\frac{\xi}{\sin{\alpha}} \right)}\,\de x }
=\abs{ \int_\R u_k(x) e^{2i \pi x \left( \omega-\frac{\xi}{\sin{\alpha}} \right) } dx}
= \abs{\widehat{u_k} \left( \frac{\xi}{\sin{\tilde\alpha_k}}-\omega \right)} 
\end{eqnarray*}
where $u_k(x)=f_1 (x) e^{-i\pi \cot{\alpha} x^2}$ is an $L^1(\R)$ function.
According to the Riemann-Lebesgue Lemma, $\widehat{u_k}(\eta)\to 0$ as $\eta\to\pm\infty$.
Thus, there exists $A$ such that, if $\eta>A$, $\dst|\widehat{u_k}(\eta)|<\frac{\eps}{n-1}$.
But then, if $\omega\geq A+T/\sin\tilde\alpha_k$ and $|\xi|<T$, $\omega-\frac{\xi}{\sin\tilde\alpha_k}>A$
thus $|\ff_{\alpha_k}[h_\omega](\xi)|<\dst\frac{\eps}{n-1}$.
It remains to chose $\omega=A+T/\sin\tilde\alpha_2>A+T/\sin\tilde\alpha_k$ and $\ffi_1=h_\omega$.
\end{proof}

\begin{remark}
The proof actually shows a bit more, namely that there are infinitely many solutions to the problem
that are not constant multiples of one an other. Indeed, one could as well take any $\ffi=\sum c_k\ffi_k$
with $|c_k|=1$.

However, an inspection of the proof of the theorem shows that the function we build has a huge support
(if $\alpha_1=0$).
Thus, if one imposes additional support constraint (as would be imposed in real life applications),
the theorem may no longer be valid.
\end{remark}

\appendix

\section{The modulus of the coefficients $c_{n,p,q}$}

The factor $c_{n,p,q}$ appearing in Proposition \ref{prop:zakchirp}
is similar to a Gauss sum. As for Gauss sums, it is possible to compute their modulus.

\begin{lemma} Let $p\in\Z$, $q\in\N$ be relatively prime integers.
Fon $n\in\Z$, let
\[ 
c_{n,p,q}=\frac{1}{q} \sum_{k=0}^{q-1} (-1)^{kp} e^{i \pi \frac{p}{q} k^2}e^{-2i \pi \frac{nk}{q} }.
\]
Then $\dst\abs{c_{n,p,q}}= \frac{1}{\sqrt{q}}$.
\end{lemma}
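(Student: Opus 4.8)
The plan is to recognize $|c_{n,p,q}|^2$ as a normalized Gauss sum and evaluate it by the standard trick of "squaring and shifting". First I would write
$$
q^2|c_{n,p,q}|^2=\sum_{k=0}^{q-1}\sum_{\ell=0}^{q-1}(-1)^{(k-\ell)p}e^{i\pi\frac{p}{q}(k^2-\ell^2)}e^{-2i\pi\frac{n(k-\ell)}{q}}.
$$
In the inner sum I would substitute $k=\ell+m$ (with $k$ running over a complete residue system mod $q$ for each fixed $\ell$, legitimate since all summands are $q$-periodic in $k$ once one notes that $(-1)^{kp}e^{i\pi\frac pq k^2}$ has period $q$ when... — careful: it has period $2q$ in general, so I would instead keep $k\in\{0,\dots,q-1\}$ and let $m=k-\ell$ range over residues mod $q$, using periodicity of the whole summand $(-1)^{(k-\ell)p}e^{i\pi\frac pq(k^2-\ell^2)}e^{-2i\pi n(k-\ell)/q}$ in $k$, which does hold mod $q$ because $k^2-\ell^2=(k-\ell)(k+\ell)$ and replacing $k$ by $k+q$ changes the exponent $i\pi\frac pq(k^2-\ell^2)$ by $i\pi p(2k-2\ell+q)\equiv i\pi pq\pmod{2\pi i}$, compensated exactly by the factor $(-1)^{(k-\ell)p}$ picking up $(-1)^{pq}$). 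After the substitution the exponent becomes $i\pi\frac pq(m^2+2m\ell)$, so
$$
q^2|c_{n,p,q}|^2=\sum_{m=0}^{q-1}(-1)^{mp}e^{i\pi\frac pq m^2}e^{-2i\pi\frac{nm}{q}}\sum_{\ell=0}^{q-1}e^{2i\pi\frac{pm}{q}\ell}.
$$

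Next I would evaluate the inner geometric sum over $\ell$: it equals $q$ when $pm\equiv0\pmod q$ and $0$ otherwise. Since $\gcd(p,q)=1$, the condition $pm\equiv0\pmod q$ forces $m\equiv0\pmod q$, i.e. $m=0$ in the range $\{0,\dots,q-1\}$. Hence only the term $m=0$ survives, contributing $(-1)^0 e^0 e^0\cdot q=q$, so $q^2|c_{n,p,q}|^2=q$, i.e. $|c_{n,p,q}|=1/\sqrt q$.

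The only genuinely delicate point is the periodicity justification in the substitution step, i.e. checking that the full summand $(-1)^{(k-\ell)p}e^{i\pi\frac pq(k^2-\ell^2)}e^{-2i\pi n(k-\ell)/q}$ is indeed $q$-periodic in $k$ (not merely $2q$-periodic), which is exactly what lets $m=k-\ell$ be summed over a complete residue system mod $q$ independently of $\ell$; this is the same parity bookkeeping with $(-1)^{pq}$ already used in the proof of Proposition~\ref{prop:zakchirp}, so it can be invoked briefly. Everything else is the routine Gauss-sum computation above.
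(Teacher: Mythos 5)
Your proof is correct and follows essentially the same route as the paper: square the sum, substitute along the diagonal $m=k-\ell$, and kill all off-diagonal terms via the geometric sum $\sum_\ell \omega^{pm\ell}$ using $\gcd(p,q)=1$. The only difference is organizational --- you justify reducing $m$ modulo $q$ by the $(-1)^{pq}$ periodicity check up front, whereas the paper keeps $s=k-\ell$ in $(-q,q)$ and recombines the negative and positive ranges by an explicit shift $s\to s+q$; both hinge on the same cancellation.
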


\begin{proof}
Indeed, setting $\omega=\exp(2i \pi /q)$, we get
\begin{eqnarray*}
q^2 \abs{c_{n,p,q}}^2 
&=& \sum_{0\ieg l,k<q} (-1)^{p(k-l)} w^{n(l-k)+\frac{p}{2}(k^2-l^2)}
=\sum_{-q < s < q} (-1)^{ps} \omega^{ns} \sum_{0 \leq k,l<q\,: k-l=s} \omega^{\frac{p}{2}(k^2-(k-s)^2)}\\
&=&\sum_{-q < s < q} (-1)^{ps} \omega^{ns-\frac{p}{2}s^2} \sum_{k-l=s, 0 \ieg k,l<q} \omega^{skp} .
\end{eqnarray*}
Now note that, for $s\geq 0$ the second sum is over $k=s\,\ldots,q-1$ while for $s<0$, this sum is over
$k=0,\ldots,q-1+s$. Finally, for $s=0$ this sum is just $=q$. The sum thus splits into three parts

\begin{eqnarray*}
q^2 \abs{c_{n,p,q}}^2 
&=&\sum_{s=-q}^{s=-1} (-1)^{ps} \omega^{ns-\frac{p}{2}s^2} \sum_{k=0}^{q-1+s} \omega^{skp}
+q+\sum_{s=1}^q (-1)^{ps} \omega^{ns-\frac{p}{2}s^2} \sum_{k=s}^{q-1} \omega^{skp}\\
&=&\sum_{s=1}^q (-1)^{ps-pq} \omega^{ns-nq-\frac{p}{2}s^2+pqs-\frac{p}{2}q^2} \sum_{k=0}^{s-1} \omega^{(s-q)kp}
+q+\sum_{s=1}^q (-1)^{ps} \omega^{ns-\frac{p}{2}s^2} \sum_{k=s}^{q-1} \omega^{skp}
 \end{eqnarray*}
changing $s\to s-q$ in the first sum.
But $\omega^q=1$ and $\omega^{\frac{p}{2}q^2}=(-1)^{pq}$ thus 
\begin{eqnarray*}
q^2 \abs{c_{n,p,q}}^2 
&=&\sum_{s=1}^q (-1)^{ps} \omega^{ns-\frac{p}{2}s^2-\frac{p}{2}q^2} \sum_{k=0}^{s-1} \omega^{skp}
+q+\sum_{s=1}^q (-1)^{ps} \omega^{ns-\frac{p}{2}s^2} \sum_{k=s}^{q-1} \omega^{skp}\\
&=&q+\sum_{s=1}^q (-1)^{ps} \omega^{ns-\frac{p}{2}s^2} \sum_{k=0}^{q-1} \omega^{skp}.
 \end{eqnarray*}
Finally, it remains to notice that $\dst\sum_{k=0}^{q-1}\omega^{skp}=0$ unless $q$ divides $sp$. As
$q$ and $p$ are relatively prime and  $s \in {1, \cdots, q-1}$, this can not happen, thus
$q^2 \abs{c_{n,p,q}}^2=q$.
\end{proof}

\section*{Acknowledgements}
The first author kindly acknowledge financial support from the French ANR programs ANR
2011 BS01 007 01 (GeMeCod), ANR-12-BS01-0001 (Aventures).
This study has been carried out with financial support from the French State, managed
by the French National Research Agency (ANR) in the frame of the ”Investments for
the future” Programme IdEx Bordeaux - CPU (ANR-10-IDEX-03-02).

\end{document}